\newtheorem{theorem}{Theorem}[section]
\newtheorem{lemma}[theorem]{Lemma}
\newtheorem{proposition}[theorem]{Proposition}
\theoremstyle{definition}
\newtheorem{definition}[theorem]{Definition}
\theoremstyle{remark}
\newtheorem{remark}[theorem]{Remark}
\numberwithin{equation}{section}
\def\id{{\bf 1}\!\!{\rm I}}
\begin{document}

\setcounter{page}{1}

\title[quantum Markov fields]{construction of a new class of quantum Markov fields}

\author[L. Accardi, F. Mukhamedov \MakeLowercase{and} A. Souissi]{Luigi Accardi,$^1$ Farrukh Mukhamedov,$^2$$^{*}$ \MakeLowercase{and} Abdessatar Souissi$^{3}$}

\address{$^{1}$
Centro Vito Volterra, Universita  di Roma "Tor Vergata", Roma
I-00133, Italy}
\email{\textcolor[rgb]{0.00,0.00,0.84}{accardi@volterra.uniroma2.it}}

\address{$^{2}$Department of Mathematical Sciences, College of Science, The United
Arab Emirates University, P.O. Box 15551, Al Ain, Abu Dhabi, UAE}
\email{\textcolor[rgb]{0.00,0.00,0.84}{farrukh.m@uaeu.ac.ae}}

\address{$^{3.1}$ Department of Mathematics,
Faculty of Sciences of Tunis, University of Tunis El-Manar, 1060
Tunis, Tunisia.}

\address{$^{3.2}$Preparatory Institute for Scientific and Technical
 Studies, La Marsa, Carthage University, Tunisia0}
\email{\textcolor[rgb]{0.00,0.00,0.84}{s.abdessatar@hotmail.fr;
abdessattar.souissi@ipest.rnu.tn}}


\let\thefootnote\relax\footnote{Copyright 2016 by the Tusi Mathematical Research Group.}

\subjclass[2010]{Primary 46L53; Secondary 60J99, 46L60, 60G50,
82B10}

\keywords{Quantum Markov field, graph, tessellation, construction.}

\date{Received: xxxxxx; Revised: yyyyyy; Accepted: zzzzzz.
\newline \indent $^{*}$Corresponding author}

\begin{abstract}
In the present paper, we propose a new construction of quantum
Markov fields on arbitrary connected,
 infinite, locally finite graphs. The construction
  is based on a specific tessellation
 on the considered graph, that allows us to express
  the Markov property for the local
 structure of the graph. Our main result concerns the existence
  and uniqueness of quantum Markov field over such graphs.
\end{abstract} \maketitle

\section{Introduction}

One of the basic open problem in quantum probability is to develop a
theory of quantum Markov fields, which are conventionally  quantum
Markov processes with multi-dimensional index set. Here Quantum
Markov fields are noncommutative extensions of the classical Markov
fields (see \cite{D,Geor,Pr}). On the other hand, these quantum
fields can be considered as extensions of quantum Markov chains
\cite{[Ac74f],fannes2} to general graphs.

In \cite{[AcFi01a],[Liebs99]} the first attempts to construct
quantum analogues of classical Markov chains have been carried out.
In \cite{[AcFi01a]} quantum Markov fields were considered over
integer lattices, unfortunately there was not given any non trivial
examples of such fields. In \cite{AOM,fannes}, quantum Markov chains
(fields) on the tree like graphs (like Cayley tree) have been
constructed and investigated, but the proposed construction does not
work for general graphs.

A main aim of the present paper is to provide a construction of new
class of quantum Markov fields on arbitrary connected,
 infinite, locally finite graphs. The construction
  is based on a specific tessellation
 on the considered graphs, it allows us to express
  the Markov property for the local
 structure of the graph. Our main result is the existence
  and uniqueness of quantum Markov field over such graphs. We note that even in the
  classical case, the proposed construction gives  a new ways to define
  Markov fields (see \cite{Spa,[Za83]}).

  \section{Graphs}

Let $G=(V, E)$ be a  ( non-oriented  simple ) graph, that is, $L$ is
a nonempty set and $E$ is identified as a subset of an ordered pairs
of $V$, i.e. $$E\subset \{ \{x,y \} \mid x,y \in E, x\ne y\}$$
Elements of $V$ and $E$ are called, respectively, \textit{vertices}
and \textit{edges}. Two vertices $x$ and $y$ are said to be
\textit{nearest neighbors} if there exist an edge joining them (i.e.
$\{x,y\}\in E$) and we denote them by $x\sim y$. For any vertex
$y\in V$ we denote its nearest neighbors by
\begin{equation}\label{Ny}
N_y := \{x\in V \mid y\sim x\}
\end{equation}
Notice that $x\notin N_x$. The set $\{y\}\cup N_y$ is said to be
\textit{interaction domain} or \textit{plaquette} at $y$. If for
every $x\in V$ one has $|N_x|<\infty$ then the graph is called
\textit{locally finite}. An \textit{edge path } or \textit{walk}
joining two vertices $x$ and $y$ is a finite sequence of edges
$x=x_0\sim x_1\sim\dots x_{d-1}\sim x_d=y$. In this case $d$ is the
length of the edge path. The graph is said to be \textit{connected }
if every two disjoint vertices can be joined by an edge path. In the
sequel, we assume that the graph $G$ is infinite, connected and
locally finite. Note that in this case the set $V$ is automatically
countable. Now for any nonempty $\Lambda\subset V$ we associate its
following parts:
\begin{itemize}
\item \textit{complement}:
\begin{equation} \label{complement}
\Lambda^c := V\setminus \Lambda
\end{equation}
\item \textit{boundary}:
\begin{equation}\label{boundary}
\partial \Lambda := \{x\in \Lambda \; \mid \; \exists y\in \Lambda^c;\quad  x\sim y\}
\end{equation}
\item \textit{interior}:
\begin{equation}\label{interior}
\overset{\circ}{\Lambda} := \Lambda\setminus \partial \Lambda
\end{equation}
\item \textit{external boundary}:
\begin{equation}\label{external}
\vec\partial \Lambda := \{y\in \Lambda^c \; \mid \quad \exists x\in
\Lambda;\quad  x\sim y\}
\end{equation}
\item \textit{closure}:
\begin{equation}\label{closure}
\overline{\Lambda} := \Lambda\cup \vec\partial \Lambda
\end{equation}
\end{itemize}
By $\mathcal F$ we denote a net of all finite subsets of $V$, i.e.
\begin{equation}\label{finite_subets}
\mathcal F := \{ \Lambda\subset V \mid | V|<\infty\}
\end{equation}
where $|\cdot|$ denotes the cardinality of a set.

\section{Tessellations on graphs}\label{tess}

In this section we propose a tessellation on the considered graphs,
which will play a key role in the construction. Therefore, the
resulting quantum Markov field will depend also on the tessellation.

 Fix a "root"  $y_1 \in V$ and define by induction the following sets:
\begin{equation}
   V_{0,1}:=\{ y_1\}
\end{equation}
 Having defined $V_{0,n}$, put
\begin{equation}\label{levels}
V_n := \bigcup_{y \in V_{0,n}} \left( \{y \} \cup N_y \right)
\end{equation}
\begin{equation}
 V_{0,n+1}:=V_{0,n}\cup\vec{\partial} V_n
\end{equation}

Define the following set of vertices:
\begin{equation}
V_0 := \bigcup_{n\geq 1} V_{0,n}.
\end{equation}
From now on, elements of $V_0$ will be called \textit{vertices}, any
other element of $V$ belongs to some plaquette at a certain element
of $V_0$. Notice that with in this construction, for every $n$, the
inner boundary $\partial V_n$ of each $V_{n}$ contain no vertex:
$$
\partial V_{n}\cap V_{0}=\emptyset
$$
Since $|V| = + \infty$ and, by assumption, $V$ is connected, one has
$$
|V_{n+1}| \ge |\overline V_n| + 1\geq |V_n| +2,  \qquad \qquad
|V_{0,n+1}|\ge |V_{0,n}|+1
$$
It follows that, if $\Lambda$ is any finite set,
there exists $N \in \mathbb N$ such that
$$
\Lambda \subseteq V_N
$$
Therefore $\{V_n\}$ is an exhaustive sequence of finite subsets
recovering the all the vertices set $V$.

One can check that
\begin{equation}\label{Vee0}
V_{0} :=\{y_1\}\cup \bigcup_{n\geq 1} \vec\partial V_{n}
\end{equation}
and
\begin{equation}\label{VinftyV}
V=\bigcup_{y\in V_{0}}\{y\}\cup N_y
\end{equation}
\begin{remark}
\begin{enumerate}
\item[(i)] For each $x \in V\setminus V_0 $,
there exists at least one $y \in V_0 $ such that $x$ belongs to
the
plaquette at $y$.\\
\item[(ii)] Each $y \in V_{0}$ belongs to its plaquette (i.e. the plaquette  $\{y\}\cup N_y$) but no other one with center in $V_0$.
\end{enumerate}
\end{remark}
The set $V_0$ given by (\ref{Vee0}) (or equivalently the family $\{
V_{0,n}; \quad n=1,2,\cdots\}$ ) is called \textit{tessellation} on
the graph $G$.

\section{Quantum Markov Fields}\label{alg}

In this section we propose a definition for backward Markov fields,
for the same graph $G=(V,E)$ with the given tessellation $\{V_{0,n}:
\quad n=1, 2, \cdots\}$.

The map
\begin{equation}
x \in V \longrightarrow \mathcal H_x \ \hbox{`` state space on $x$
''}
\end{equation}
defines a bundle on $V$ whose fiber is a finite dimensional Hilbert space
$\mathcal H_x$. Denote $\mathcal A_x:=\mathcal B(\mathcal H_x), x\in
V$. Define for any finite subset $\Lambda\subset V$ the algebra
\begin{equation}\label{AV}
\mathcal A_{\Lambda}:= \bigotimes_{x \in \Lambda}\mathcal A_x.
\end{equation}
then one get on a canonical way, the quasi-local algebra
\begin{equation}\label{A}
\mathcal A_V:= \bigotimes_{x \in V}\mathcal A_x
\end{equation}
defined as the closure of the local algebra
\begin{equation}
\mathcal A_{V,loc}:= \bigcup_{\Lambda\in\mathcal F}\mathcal
A_{\Lambda}.
\end{equation}
where $\mathcal F$ is given by (\ref{finite_subets}).

Analogously, one can define for any subset $\Lambda'\subset V$,
the algebra $\mathcal A_{\Lambda'}:=\bigotimes_{x\in\Lambda'}\mathcal A_x$.\\
Notice that for $\Lambda\subset\Lambda'\subset V$ one can see
$\mathcal A_{\Lambda}$  as $C^*$-subalgebra of $\mathcal
A_{\Lambda'} $
 through the following embedding
\begin{equation}
 \mathcal A_{\Lambda}\equiv \mathcal A_{\Lambda}\otimes \id_{\Lambda'\setminus \Lambda }\subset \mathcal A_{\Lambda'}
 \end{equation}

\begin{definition}
Consider a triplet $C \subset B \subset A$ of unital $C^*$-algebras.
Recall \cite{ACe} that a {\it quasi-conditional expectation} with
respect to the given triplet is a completely positive (CP), unital
linear map $ \mathcal{E} \,:\, A \to B$ such that $ \mathcal{E}(ca)
= c \mathcal{E}(a)$, for all $a\in A,\, c \in C$.
\end{definition}

We give the definition of general of backward quantum Markov field
which is independent of the tessellation.

\begin{definition}\label{defQ-Markov-field}
A state $\varphi$ on $\mathcal A_V$ is said to be \textit{backward
quantum Markov field} if for any sequence $\{\Lambda_n
\}_{n=0}^{\infty} $ of finite subsets of $V$ satisfying
$\Lambda_n\subset\subset\Lambda_{n+1}$, there exists a pair
$(\varphi_{\Lambda_0}, \{E_{\Lambda_{n},
\Lambda_{n+1}}\}_{n=0}^{\infty}\})$ with  $\varphi_{\Lambda_0}$
 is a state on $\mathcal A_{\Lambda_0}$ and  $E_{\Lambda_{n},\Lambda_{n+1}}$ is a
  quasi-conditional expectation with respect to the triplet
 $\mathcal A_{\Lambda_{n}}\subset\mathcal A_{\bar\Lambda_n}\subset \mathcal A_{\Lambda_{n+1}}$
 such that
\begin{equation}\label{bk-M_field}
\varphi =\lim_{n\to \infty} \varphi_{\Lambda_0}\circ E_{\Lambda_0,
\Lambda_1}\circ\dots\circ E_{\Lambda_n, \Lambda_{n+1}}
\end{equation}
where the limit is taken in the weak-*-topology.
\end{definition}

\begin{remark}
In Definition \ref{defQ-Markov-field}, the condition
$\Lambda_n\subset\subset\Lambda_{n+1}$ for every
 $n\in\mathbb N$ implies that $\Lambda_n\uparrow V$ and the limit state obtained by the right side of the equation (\ref{bk-M_field}) is defined on the full algebra $\mathcal A_V$,

 If $\varphi$ is a \textit{backward quantum Markov field} in the sense of Definition \ref{defQ-Markov-field},
  then it satisfy Definition 4.2 of \cite{AOM} for any increasing sequence  $\{\Lambda_n\}_{n=0}^{\infty}$
  of finite subsets of $V$ such that $\bar\Lambda_n=\Lambda_{n+1}$, to get such
  a sequence  of subsets, we consider  $\Lambda_0\in\mathcal F$, and  for $\in n\geq1 $ put
$$\Lambda_n=\bar\Lambda_{n-1}.$$
Clearly one has $\Lambda_n\subset\subset \Lambda_{n+1}$ and
$\Lambda_n \uparrow V$.
  \end{remark}

Now we introduce a class of \textit{backward quantum Markov field}
that depends on the tessellation $\{V_{0,n},\quad  n=1,2,\cdots\}$

\begin{definition}\label{defQ-Markov-field-tess}
A state $\varphi$ on $\mathcal A_V$ is said to be \textit{backward
quantum Markov field} w.r.t. the tessellation  $\{V_{0,n},\quad
n=1,2,\cdots\}$, ( or \textit{$V_0$-backward quantum Markov field})
if for any sequence $\{\Lambda_n \}_{n=0}^{\infty} $ of finite
subsets satisfying
 \begin{equation}\label{tesselation_assumption}
 \Lambda_n\subset\subset\Lambda_{n+1},\quad \vec\partial \Lambda\cap V_0 =\emptyset
 \end{equation}
 there exists a pair $(\varphi_{\Lambda_0}, \{E_{\Lambda_{n},
\Lambda_{n+1}}\}_{n=0}^{\infty}\})$ with  $\varphi_{\Lambda_0}$
 is a state on $\mathcal A_{\Lambda_0}$ and $E_{\Lambda_{n},\Lambda_{n+1}}$ is a
  quasi-conditional expectation with respect to the triplet
 $\mathcal A_{\Lambda_{n}}\subset\mathcal A_{\bar\Lambda_n}\subset \mathcal A_{\Lambda_{n+1}}$
 such that
\begin{equation}\label{bk-M_field-tess}
\varphi=\lim_{n\to \infty} \varphi_{\Lambda_0}\circ E_{\Lambda_0,
\Lambda_1}\circ\dots\circ E_{\Lambda_n, \Lambda_{n+1}}
\end{equation}
where the limit is taken in the weak-*-topology.
\end{definition}

Now we fix the following product state
\begin{equation}\label{phi0}
\varphi^{0}:= \bigotimes_{x\in V}\varphi^{0}_{x}
\end{equation}
on the algebra $\mathcal A_V$, where $\varphi^0_x$ is a state on
$\mathcal A_x$ for every $x\in V$. Denote for $\Lambda\subset V$,
 \begin{equation}
 \varphi^0_{\Lambda}:= \bigotimes_{x\in \Lambda}\varphi_x^0
 \end{equation}
which is the restriction of the state $\varphi^0_V$  to $\mathcal
A_{\Lambda}$.

We aim to construct a quantum Markov field on the algebra $\mathcal
A_V$ through a perturbation of the product state $\varphi_V^0$.

\section{Construction of conditional density amplitudes }

It is well known \cite{ACe} that quasi-conditional expectations are
more convenient than Umegaki conditional expectations (see
definition (\ref{umegakiexp})) to express
  the non-commutative Markov property. In what follows, we will perturb $\varphi$-conditional
  expectations (see \cite{ACe}) to get quasi-conditional expectations using
  a commuting set of operators with the considered tessellation.

For any ordered pair $y\in V_0$ and $x\in N_y$, let be given an
operator
$$
\tilde K_{(x,y)}\in \mathcal A_{\{x,y\}}
$$
such that it is invertible and the $C^*$-subalgebra
\begin{equation}\label{algebra_amplitudes}
\mathcal K=\overline{\{\tilde K_{\{x,y\}}^* \ , \  \tilde
K_{\{x,y\}} \, : \, y\in V_0, \ x\in N_y\}}^{C^*}
\end{equation}
is commutative.

\begin{definition}\label{umegakiexp}
A \textit{Umegaki conditional expectation} is a norm one projection
from a $C^\ast$-algebra onto its $C^\ast$-subalgebra.
\end{definition}

\begin{definition}
Let $\mathcal A_1, \mathcal A_2$ be two $C^\ast$-algebras  with units respectively
 $I_1$ and $ I_2$ and let $\mathcal A=\mathcal A_1\otimes \mathcal A_2$.
An element $K\in\mathcal A$ is called a \textit{conditional density
amplitude} w.r.t. a state $\varphi$ on
  $I_1\otimes \mathcal A_2$, if one has
 \begin{equation}
 \mathbb E^\varphi(K^\ast K)=I_1
 \end{equation}
 where $\mathbb E^\varphi$ is the Umegaki conditional expectation from $\mathcal A$
  onto $\mathcal A_1\otimes I_2$ defined by the linear extension of
  \begin{equation}
  \mathbb E^\varphi(a_1\otimes a_2) = \varphi(I_1\otimes a_2) a_1\otimes
  I_2.
  \end{equation}
An operator $K$ is also called a \textit{conditional density
amplitude } for the $\varphi$-conditional
  expectation $\mathbb E^\varphi$.
\end{definition}

For each $x\in V$ by $\mathbb E^0_{\{x\}^c}$ we denote the Umegaki
conditional expectation from the algebra $\mathcal A$ onto the
algebra $\mathcal A_{\{x\}^c}$ defined on localized elements
$a=\bigotimes_{z\in V} a_{z}= a_{x}\otimes a_{\{x\}^c}$ by:
\begin{equation}\label{umegakix}
\mathbb E^0_{\{x\}^c}(a_x\otimes a_{\{x\}^c})=
\varphi^0_x(a_x)a_{\{x\}^c}.
\end{equation}

One can prove the following facts.

\begin{lemma}\label{cmm}
For every pair of vertices $(x,y)\in V^2$ one has
\begin{equation}
[\mathbb E^0_{\{x\}^c}, \mathbb E^0_{\{y\}^c}]=0
\end{equation}
\end{lemma}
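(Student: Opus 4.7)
The plan is to verify the commutation identity by direct computation on localized elementary tensors and then extend by linearity and norm continuity, since the Umegaki conditional expectations $\mathbb E^0_{\{x\}^c}$ are bounded and the local algebra $\mathcal A_{V,loc}$ is norm dense in $\mathcal A_V$. The case $x=y$ is trivial because both maps coincide and any Umegaki conditional expectation is idempotent; so the real content is the case $x\neq y$.

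For $x\neq y$, I would write a generic localized element as the elementary tensor
\begin{equation*}
a = a_x \otimes a_y \otimes a_{\{x,y\}^c},
\qquad a_x\in\mathcal A_x,\ a_y\in\mathcal A_y,\ a_{\{x,y\}^c}\in\mathcal A_{\{x,y\}^c},
\end{equation*}
and apply the definition \eqref{umegakix} twice. One computes
\begin{equation*}
\mathbb E^0_{\{y\}^c}\circ\mathbb E^0_{\{x\}^c}(a)
= \mathbb E^0_{\{y\}^c}\!\bigl(\varphi^0_x(a_x)\,a_y\otimes a_{\{x,y\}^c}\bigr)
= \varphi^0_x(a_x)\,\varphi^0_y(a_y)\,a_{\{x,y\}^c},
\end{equation*}
and symmetrically
\begin{equation*}
\mathbb E^0_{\{x\}^c}\circ\mathbb E^0_{\{y\}^c}(a)
= \varphi^0_y(a_y)\,\varphi^0_x(a_x)\,a_{\{x,y\}^c}.
\end{equation*}
Since $\varphi^0_x(a_x),\varphi^0_y(a_y)\in\mathbb C$, the two scalars commute and the two expressions agree.

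To pass from elementary tensors to all of $\mathcal A_V$, I would invoke linearity to extend the identity to the algebraic tensor product $\mathcal A_{\{x,y\}}\otimes_{\mathrm{alg}}\mathcal A_{\{x,y\}^c}$, hence to every $\mathcal A_\Lambda$ with $\{x,y\}\subset\Lambda$, and then use that both $\mathbb E^0_{\{x\}^c}$ and $\mathbb E^0_{\{y\}^c}$ are completely positive contractions on $\mathcal A_V$ to extend by norm continuity to the whole quasi-local algebra. The only mild subtlety is a bookkeeping one: making the identifications in \eqref{AV} compatible with the decomposition $\mathcal A_V\cong\mathcal A_x\otimes\mathcal A_y\otimes\mathcal A_{\{x,y\}^c}$ so that the two expectations act on disjoint tensor factors. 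Once this identification is made explicit, the argument is essentially a routine verification rather than a substantive obstacle.
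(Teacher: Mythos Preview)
Your proof is correct: checking the identity on elementary tensors via \eqref{umegakix} and extending by linearity and norm continuity is exactly the natural argument. The paper itself omits the proof entirely (it only states ``One can prove the following facts''), so there is nothing to compare; your write-up simply supplies the routine verification the authors left to the reader.
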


For any $\Lambda\in\mathcal F$ by virtue of Lemma \ref{cmm} we
define
\begin{equation}\label{E0lambda}
\mathbb E^{0}_{\Lambda^c}:= \prod_{x\in\Lambda} \mathbb
E^0_{\{x\}^c}.
\end{equation}

\begin{lemma}\label{E0LamCE}
For any $\Lambda\in\mathcal F$ the map $\mathbb E^{0}_{\Lambda^c}$
given by \eqref{E0lambda} is a Umegaki conditional expectation from
$\mathcal A_V$ onto $\mathcal A_{\Lambda^c}$ such that for
$a_{\Lambda}\in\mathcal A_{\Lambda} a_{\Lambda^c}\in\mathcal
A_{\Lambda^c}$ one has
 \begin{equation}\label{E0express}
\mathbb E^0_{\Lambda^c}(a_{\Lambda}\otimes a_{\Lambda^c})=
\varphi^0_\Lambda(a_{\Lambda})a_{\Lambda^c}
\end{equation}
\end{lemma}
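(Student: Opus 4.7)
The plan is to prove both assertions by a short induction on $|\Lambda|$, exploiting the commutativity of Lemma \ref{cmm} and the product form of the reference state $\varphi^{0}$.

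First I would fix an enumeration $\Lambda=\{x_{1},\dots,x_{k}\}$ and note that by Lemma \ref{cmm} the composition $\mathbb{E}^{0}_{\{x_{1}\}^{c}}\circ\cdots\circ\mathbb{E}^{0}_{\{x_{k}\}^{c}}$ is independent of the chosen ordering, so \eqref{E0lambda} is a well-defined map. The base case $k=1$ is exactly the definition \eqref{umegakix}. For the inductive step I write $\Lambda=\Lambda'\cup\{x_{k}\}$ with $x_{k}\notin\Lambda'$ and evaluate the composition on a simple localized tensor $a=a_{\Lambda}\otimes a_{\Lambda^{c}}=\bigotimes_{i=1}^{k}a_{x_{i}}\otimes a_{\Lambda^{c}}$: applying $\mathbb{E}^{0}_{\{x_{k}\}^{c}}$ first replaces $a_{x_{k}}$ by the scalar $\varphi^{0}_{x_{k}}(a_{x_{k}})$, and the induction hypothesis on $\Lambda'$ then yields
\begin{equation*}
\mathbb{E}^{0}_{\Lambda^{c}}(a_{\Lambda}\otimes a_{\Lambda^{c}})=\Big(\prod_{i=1}^{k}\varphi^{0}_{x_{i}}(a_{x_{i}})\Big)\,a_{\Lambda^{c}}=\varphi^{0}_{\Lambda}(a_{\Lambda})\,a_{\Lambda^{c}},
\end{equation*}
where the last equality uses the product form $\varphi^{0}_{\Lambda}=\bigotimes_{x\in\Lambda}\varphi^{0}_{x}$. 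This establishes \eqref{E0express} on $\mathcal{A}_{V,loc}$, and the formula extends to $\mathcal{A}_{V}$ by linearity and $C^{\ast}$-norm continuity.

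Next I would verify the Umegaki conditional expectation properties directly from this explicit formula. Complete positivity and unitality follow because each $\mathbb{E}^{0}_{\{x\}^{c}}$ is unital and CP and the composition preserves both properties. The norm bound $\|\mathbb{E}^{0}_{\Lambda^{c}}\|\leq 1$ is immediate from the composition of norm-one maps, and unitality upgrades this to $\|\mathbb{E}^{0}_{\Lambda^{c}}\|=1$. From \eqref{E0express} the range is contained in $\mathcal{A}_{\Lambda^{c}}$, and for $a_{\Lambda^{c}}\in\mathcal{A}_{\Lambda^{c}}$ (i.e., $a_{\Lambda}=\id_{\Lambda}$) one has $\varphi^{0}_{\Lambda}(\id_{\Lambda})=1$, so $\mathbb{E}^{0}_{\Lambda^{c}}$ restricts to the identity on $\mathcal{A}_{\Lambda^{c}}$; in particular it is an idempotent onto $\mathcal{A}_{\Lambda^{c}}$, which combined with the preceding items identifies it as a Umegaki conditional expectation (in the sense of Definition \ref{umegakiexp}).

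The only genuine point that requires care is the order-independence of the product in \eqref{E0lambda}, but this is handed to us by Lemma \ref{cmm}; once that is invoked, the rest is a routine propagation of the single-site formula \eqref{umegakix} through the tensor structure. Consequently I expect no essential obstacle; the main care is only to ensure that the induction step is performed on a dense set of simple tensors and that the closure/extension to $\mathcal{A}_{V}$ is justified by the norm-one bound on the finitely many compositions involved.
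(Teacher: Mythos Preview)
Your proof is correct. The paper does not actually supply a proof of this lemma---it is stated immediately after the sentence ``One can prove the following facts'' and left to the reader---so your induction on $|\Lambda|$ using Lemma~\ref{cmm} and the single-site formula \eqref{umegakix} is precisely the routine verification the authors have in mind, and there is nothing to compare.
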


\begin{remark}
The map $\mathbb E^0_{\Lambda^c}$ can be defined, through the
equation (\ref{E0express}), for an arbitrary part (not necessarily finite)
  $\Lambda$ of $V$ and it is still a Umegaki conditional expectation from $\mathcal A_V$
onto $\mathcal A_{\Lambda^c}$
\end{remark}

\begin{proposition}
Let $y\in V_{0},$ the operator
\begin{equation}\label{Bny}
B_{N_y}:=\mathbb E^0_{\{y\}^c}\left(\bigg|\prod_{x\in N_y} \tilde
K_{\{x,y\}}\bigg|^2\right) \in \mathcal A_{N_y}
\end{equation}
is invertible.
\end{proposition}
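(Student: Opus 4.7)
The strategy is to show that the argument of $\mathbb{E}^0_{\{y\}^c}$ is bounded below by a positive multiple of the identity, and then invoke positivity and unitality of the Umegaki conditional expectation to transport that lower bound to $B_{N_y}$.

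First, because the $C^*$-algebra $\mathcal K$ defined in (\ref{algebra_amplitudes}) is commutative and contains all the operators $\tilde K_{\{x,y\}}$ and $\tilde K_{\{x,y\}}^*$, the finite family $\{\tilde K_{\{x,y\}}\}_{x \in N_y}$, viewed inside $\mathcal A_{\{y\}\cup N_y}\subset \mathcal A_V$ through the canonical embeddings $\mathcal A_{\{x,y\}}\hookrightarrow \mathcal A_V$, is pairwise commuting. Hence the ordered product $P_y := \prod_{x \in N_y} \tilde K_{\{x,y\}}$ is independent of the order, and, using that each $\tilde K_{\{x',y\}}^*$ commutes with each $\tilde K_{\{x,y\}}$, I may rearrange
\begin{equation*}
|P_y|^2 \;=\; P_y^*P_y \;=\; \prod_{x \in N_y} \tilde K_{\{x,y\}}^*\,\tilde K_{\{x,y\}} \;=\; \prod_{x \in N_y} |\tilde K_{\{x,y\}}|^2 .
\end{equation*}

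Second, invertibility of each $\tilde K_{\{x,y\}}$ in the finite-dimensional algebra $\mathcal A_{\{x,y\}}$ forces the positive operator $|\tilde K_{\{x,y\}}|^2$ to be invertible, and therefore bounded below: there exists $\varepsilon_x > 0$ with $|\tilde K_{\{x,y\}}|^2 \ge \varepsilon_x \id$. Since the factors $\{|\tilde K_{\{x,y\}}|^2\}_{x\in N_y}$ all lie in the commutative unital $C^*$-algebra generated by $\mathcal K$, I can realize them as continuous functions on a compact Hausdorff space via the Gelfand transform; the pointwise inequalities $|\tilde K_{\{x,y\}}|^2 \ge \varepsilon_x$ then multiply to give
\begin{equation*}
|P_y|^2 \;=\; \prod_{x \in N_y} |\tilde K_{\{x,y\}}|^2 \;\ge\; \varepsilon\, \id, \qquad \varepsilon := \prod_{x \in N_y} \varepsilon_x > 0 .
\end{equation*}
(The finiteness of $N_y$, guaranteed by local finiteness of $G$, is needed here so that $\varepsilon>0$.)

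Finally, $\mathbb{E}^0_{\{y\}^c}$ is a Umegaki conditional expectation, hence positive and unital; applying it to the previous inequality yields $B_{N_y} \ge \varepsilon\,\id$ in $\mathcal A_{N_y}$, which is the desired invertibility. The only step that deserves genuine care is the commutativity argument: the operators $\tilde K_{\{x,y\}}$ for distinct $x\in N_y$ share the tensor factor at $y$, so they do \emph{not} commute for tensor-product reasons, and the commutativity hypothesis built into the definition of $\mathcal K$ is precisely what is required to distribute the modulus squared across the product.
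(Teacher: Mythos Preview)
Your proof is correct, and its final step---applying the positive unital map $\mathbb E^0_{\{y\}^c}$ to a lower bound $|P_y|^2\ge\varepsilon\,\id$---is exactly what the paper does. The difference lies in how the lower bound on $|P_y|^2$ is obtained. You decompose $|P_y|^2=\prod_{x\in N_y}|\tilde K_{\{x,y\}}|^2$ using the commutativity of $\mathcal K$, bound each factor below, and then multiply the pointwise inequalities via the Gelfand transform. The paper instead argues directly: since each $\tilde K_{\{x,y\}}$ is invertible, so is the product $P_y$ (a product of invertibles is invertible in any unital algebra, no commutativity needed), hence $|P_y|^2=P_y^*P_y$ is positive and invertible in the finite-dimensional algebra $\mathcal A_{\{y\}\cup N_y}$, so its spectrum is contained in some $[\varepsilon,\|P_y\|^2]$ with $\varepsilon>0$.

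The paper's route is shorter and, notably, shows that the commutativity hypothesis on $\mathcal K$ is \emph{not} actually needed for this proposition (it is needed elsewhere, e.g.\ in Lemma~\ref{conddensity} and Theorem~\ref{K-Lambda-density}). Your closing remark that commutativity ``is precisely what is required'' here overstates its role: commutativity makes the product $P_y$ order-independent and permits your factorization of $|P_y|^2$, but invertibility of $P_y$ and the ensuing spectral bound hold regardless.
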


\begin{proof}
Let us consider $B_{\{y\}\cup N_y}:=\left|\prod_{x\in N_y} \tilde
K_{\{x,y\}}\right|^2\in \mathcal A_{\{y\}\cup N_y}$  and denote its
spectrum by $\sigma(B_{\{y\}\cup N_y})$, which is a closed subset of
the complex field. Since the operator  $B_{\{y\}\cup N_y}$ is
positive definite, then $\sigma(B_{\{y\}\cup N_y})\subset ]0,
\|\widetilde{K}_{\{y\}\cup N_y}\|]$. Since the spectrum does not
contain zero then there is $\varepsilon>0$ such that
$\sigma(B_{\{y\}\cup N_y})\subset[\varepsilon, \|B\|]$, therefore
$B_{\{y\}\cup N_y}\geq \varepsilon \id$.  This yields $\mathbb
E_{\{y\}^c}^0(B)\geq \varepsilon \id$, which means $B_{N_y}=\mathbb
E^0_{\{y\}^c}$ is invertible.
\end{proof}

In the sequel, we assume that for every $y\in V_0$ the operator
$B_{N_y}$ belongs to the commutant $\mathcal K'$ (w.r.t. $\mathcal
A_V$) of the algebra $\mathcal K$ (see \eqref{algebra_amplitudes}).
Note that under this condition the operators $B^{\pm 1/2}_{N_y}$
also belong to $\mathcal K'$.

\begin{lemma} \label{conddensity}
The operator
\begin{equation}\label{kyny}
K_{\{y\}\cup N_y}:= \left(\prod_{x\in N_y}\widetilde{K}_{\{x,y\}}\right)B_{N_y}^{-1/2}
\end{equation}
is a $\varphi^0_{\{y\}}$-conditional density amplitude in the
algebra $\mathcal A_{\{y\}\cup N_y}$.
\end{lemma}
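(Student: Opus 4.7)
The plan is to verify the defining identity
$\mathbb{E}^{\varphi^0_{\{y\}}}(K_{\{y\}\cup N_y}^{*}K_{\{y\}\cup N_y})=I_{N_y}$
by a direct computation that pushes everything inside the commutative subalgebra $\mathcal K$ and then uses the bimodule property of the Umegaki conditional expectation $\mathbb{E}^0_{\{y\}^c}$ restricted to $\mathcal A_{\{y\}\cup N_y}$.

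First I would identify the ``$\varphi^0_{\{y\}}$-conditional expectation'' $\mathbb{E}^{\varphi^0_{\{y\}}}$ appearing in the definition of a conditional density amplitude with the restriction of $\mathbb{E}^0_{\{y\}^c}$ to $\mathcal A_{\{y\}\cup N_y}$; indeed, for a product element $a_y\otimes a_{N_y}$, formula \eqref{umegakix} gives $\mathbb{E}^0_{\{y\}^c}(a_y\otimes a_{N_y})=\varphi^0_{\{y\}}(a_y)\,a_{N_y}$, which is exactly the map in the definition of conditional density amplitude with $\mathcal A_1=\mathcal A_{N_y}$ and $\mathcal A_2=\mathcal A_{\{y\}}$.

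Next I would simplify $K_{\{y\}\cup N_y}^{*}K_{\{y\}\cup N_y}$. Since all $\widetilde{K}_{\{x,y\}}$ with $x\in N_y$ lie in the commutative $C^{*}$-algebra $\mathcal K$ from \eqref{algebra_amplitudes}, the product $\prod_{x\in N_y}\widetilde{K}_{\{x,y\}}$ is a normal element of $\mathcal K$, so
\[
K_{\{y\}\cup N_y}^{*}K_{\{y\}\cup N_y}
= B_{N_y}^{-1/2}\Bigl|\prod_{x\in N_y}\widetilde{K}_{\{x,y\}}\Bigr|^{2}B_{N_y}^{-1/2},
\]
using the standing assumption that $B_{N_y}^{\pm 1/2}\in\mathcal K'$ (so that $B_{N_y}^{-1/2}$ commutes through the middle factor without obstruction, though the computation below does not actually require this — only that $B_{N_y}^{-1/2}\in\mathcal A_{N_y}$).

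Now comes the key step: since $B_{N_y}\in\mathcal A_{N_y}$ by the preceding proposition, the functional calculus gives $B_{N_y}^{-1/2}\in\mathcal A_{N_y}\subset\mathcal A_{\{y\}^c}$. The Umegaki conditional expectation $\mathbb{E}^0_{\{y\}^c}$ is an $\mathcal A_{\{y\}^c}$-bimodule map, hence
\[
\mathbb{E}^0_{\{y\}^c}\!\left(B_{N_y}^{-1/2}\Bigl|\prod_{x\in N_y}\widetilde{K}_{\{x,y\}}\Bigr|^{2}B_{N_y}^{-1/2}\right)
= B_{N_y}^{-1/2}\,\mathbb{E}^0_{\{y\}^c}\!\left(\Bigl|\prod_{x\in N_y}\widetilde{K}_{\{x,y\}}\Bigr|^{2}\right)B_{N_y}^{-1/2}
= B_{N_y}^{-1/2}\,B_{N_y}\,B_{N_y}^{-1/2}=I_{N_y},
\]
by the definition \eqref{Bny} of $B_{N_y}$ and the fact that $B_{N_y}^{-1/2}$ commutes with $B_{N_y}$ (both are functions of the same positive invertible element). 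Combined with the first step, this yields exactly the conditional density amplitude identity, completing the proof.

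The only subtle points to watch are (a) that $B_{N_y}^{-1/2}$ is genuinely in $\mathcal A_{N_y}$, which uses the invertibility already established in the previous proposition together with the continuous functional calculus, and (b) that the commutativity of $\mathcal K$ really allows the identification $\bigl(\prod\widetilde K\bigr)^{*}\bigl(\prod\widetilde K\bigr)=\bigl|\prod\widetilde K\bigr|^{2}$; everything else is just the module property of a conditional expectation.
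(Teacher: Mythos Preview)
Your proposal is correct and follows essentially the same route as the paper: write $K_{\{y\}\cup N_y}^{*}K_{\{y\}\cup N_y}=B_{N_y}^{-1/2}\bigl|\prod_{x\in N_y}\widetilde K_{\{x,y\}}\bigr|^{2}B_{N_y}^{-1/2}$ using the commutativity of $\mathcal K$, then pull $B_{N_y}^{-1/2}\in\mathcal A_{N_y}\subset\mathcal A_{\{y\}^c}$ outside $\mathbb E^{0}_{\{y\}^c}$ via the bimodule property and collapse $B_{N_y}^{-1/2}B_{N_y}B_{N_y}^{-1/2}=\id$. Your extra care in identifying $\mathbb E^{\varphi^{0}_{\{y\}}}$ with $\mathbb E^{0}_{\{y\}^c}\!\restriction_{\mathcal A_{\{y\}\cup N_y}}$ and in noting that only $B_{N_y}^{-1/2}\in\mathcal A_{N_y}$ (rather than the stronger $B_{N_y}^{-1/2}\in\mathcal K'$) is needed here is a welcome clarification, but the argument is otherwise identical to the paper's.
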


\begin{proof} Using the commutativity of the algebra $\mathcal{K}$ we
obtain
\begin{eqnarray*}
\mathbb E^0_{\{y\}^c}\left(K_{\{y\}\cup N_y}^*K_{\{y\}\cup
N_y}\right)&=
&\mathbb E^0_{\{y\}^c}\left(B_{N_y}^{-1/2}\left(\prod_{x\in N_y}\tilde K_{\{x,y\}}\right)^*\left(\prod_{y\in N_y}\tilde K_{\{x,y\}}\right)B_{N_y}^{-1/2}\right)\\
&=& (B_{N_y}^{-1/2})^*E^0_{\{y\}^c}\left(\prod_{x\in N_y}\tilde K_{\{x,y\}}^*\tilde K_{\{x,y\}}\right)B_{N_y}^{-1/2}\\
&=& (B_{N_y}^{-1/2})^*B_{N_y}B_{N_y}^{-1/2}\\
&=& \id.
\end{eqnarray*}
This completes the proof.
\end{proof}

Now, for each $ \Lambda\in \mathcal F,$  we define
\begin{equation}\label{df-dext0-Lambda}
\vec{\partial}_0\Lambda := \bigcup_{y \in \partial\Lambda\cap
V_0}N_y
\end{equation}
 By construction the family
$$\{K_{\{y\}\cup N_y}^* \ , \ K_{\{y\}\cup N_y} \ : \ x\sim y\in V_0 \}$$
is commutative, therefore the following operator is well defined
\begin{equation}\label{df-K-bar-Lambda}
K_{\Lambda \cup \vec{\partial}_0\Lambda}:=\prod_{y \in \Lambda\cap
V_0} K_{\{y\} \cup N_y}\in \mathcal A_{\Lambda \cup
\vec{\partial}_0\Lambda}\subseteq \mathcal A_{\bar\Lambda} \ \ \
\textrm{for every} \ \ \Lambda \in\mathcal F.
\end{equation}

\begin{remark}\label{rem-K-bar-Lambda}
\begin{itemize}
\item[1.] In general, it is possible that $\mathcal A_{\Lambda \cup \vec{\partial}_0\Lambda}$
is a proper sub-algebra of $\mathcal A_{\bar\Lambda}$. Since, by
construction of the tessellation, the set $\Lambda \cup
\vec{\partial}_0\Lambda$ cannot contain elements of $V_0$.
\item[2.]  If $\Lambda\cap V_0= \emptyset$,
we convent that  $K_{\Lambda \cup \vec{\partial}_0\Lambda}=\id$.
\end{itemize}
\end{remark}
\begin{theorem}\label{K-Lambda-density}
For any $\Lambda\in\mathcal F,$ the operator
$K_{\Lambda\cup\vec\partial_0\Lambda}$ defined by
(\ref{df-K-bar-Lambda}) is a conditional density amplitude for the
 Umegaki conditional expectation $\mathbb E^{0}_{(\Lambda\cap V_0)^c}$.
\end{theorem}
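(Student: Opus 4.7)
I would reduce the statement to the single-vertex case already established in Lemma \ref{conddensity}, exploiting two independent commutativity features of the setup: the commuting product structure of $\mathbb E^0_{(\Lambda\cap V_0)^c}$ granted by Lemma \ref{cmm}, and the commutativity of the family $\{K_{\{y\}\cup N_y}\}_{y\in V_0}$ observed above \eqref{df-K-bar-Lambda}. Concretely, the target identity is
\[
\mathbb E^0_{(\Lambda\cap V_0)^c}\bigl(K_{\Lambda\cup\vec{\partial}_0\Lambda}^{*}\,K_{\Lambda\cup\vec{\partial}_0\Lambda}\bigr)=\id,
\]
and the plan is to collapse the left-hand side one vertex $y\in\Lambda\cap V_0$ at a time.

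First I would use \eqref{E0lambda} and Lemma \ref{cmm} to factorise
\[
\mathbb E^0_{(\Lambda\cap V_0)^c}=\prod_{y\in\Lambda\cap V_0}\mathbb E^0_{\{y\}^c},
\]
the composition being independent of the chosen order. In parallel, since the operators $K_{\{y\}\cup N_y}$ and their adjoints form a commutative family, I can write
\[
K_{\Lambda\cup\vec{\partial}_0\Lambda}^{*}\,K_{\Lambda\cup\vec{\partial}_0\Lambda}=\prod_{y\in\Lambda\cap V_0}K_{\{y\}\cup N_y}^{*}\,K_{\{y\}\cup N_y}.
\]

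The decisive input is Remark~(ii) of Section \ref{tess}: for distinct $y,y'\in V_0$ one has $y\notin\{y'\}\cup N_{y'}$, hence $K_{\{y'\}\cup N_{y'}}^{*}\,K_{\{y'\}\cup N_{y'}}\in\mathcal A_{\{y\}^c}$. By the bimodule property of the Umegaki conditional expectation $\mathbb E^0_{\{y\}^c}$, every such factor is left invariant and may be pulled outside the action of $\mathbb E^0_{\{y\}^c}$, while the remaining factor $K_{\{y\}\cup N_y}^{*}\,K_{\{y\}\cup N_y}$ is mapped to $\id$ by Lemma \ref{conddensity}. Iterating this collapse over all $y\in\Lambda\cap V_0$ yields $\id$, which is exactly the conditional density amplitude property. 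The whole argument hinges on the disjointness property of the tessellation, which is tailor-made to decouple the single-vertex conditional expectations, so I expect no substantial technical obstacle beyond careful bookkeeping of the commutations.
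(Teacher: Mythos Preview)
Your proposal is correct and follows essentially the same route as the paper: factor $K_{\Lambda\cup\vec\partial_0\Lambda}^{*}K_{\Lambda\cup\vec\partial_0\Lambda}$ via the commutativity of the plaquette amplitudes, use the tessellation property that distinct $y,y'\in V_0$ satisfy $y\notin\{y'\}\cup N_{y'}$ so that each factor $K_{\{y'\}\cup N_{y'}}^{*}K_{\{y'\}\cup N_{y'}}$ lies in $\mathcal A_{\{y\}^c}$ and is fixed by $\mathbb E^0_{\{y\}^c}$, then apply Lemma~\ref{conddensity} to collapse the remaining factor to~$\id$. The paper phrases the invariance step as ``$K_{\{y\}\cup N_y}$ is localized in $\{z\}^c$'' rather than invoking the bimodule property, but the argument is identical.
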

\begin{proof}
 Since the family $\{K_{\{y\}\cup N_y}, K^\ast_{\{y\}\cup N_y}\, \mid \, y\in \Lambda\cap V_0\}$
 is commutative, then one can write
 \begin{equation}
 K_{\Lambda\cup\vec\partial_0\Lambda}^\ast K_{\Lambda\cup \vec\partial_0\Lambda}
 = \prod_{y\in\Lambda\cap V_0} K_{\{y\}\cup N_y}^\ast K_{\{y\}\cup N_y}
 \end{equation}
 and using the following property of the tessellation:
  \textit{for disjoint elements $y$ and $z$ of $V_0$ the plaquette at $y$ does not contain
  $z$}, we conclude that $K_{\{y\}\cup N_y}$ is localized in $\{z\}^c$. Then  by Lemma
  \ref{umegakix} one gets
 $$\mathbb E^0_{\{z\}^c}(K_{\{y\}\cup N_y}^\ast K_{\{y\}\cup N_y}) = K_{\{y\}\cup N_y}^\ast K_{\{y\}\cup N_y}$$
 then after a small iteration, we obtain
$$
\mathbb E^0_{(\Lambda\cap
V_0)^c}\left(K_{\Lambda\cup\vec\partial_0\Lambda}^\ast
K_{\Lambda\cup\vec\partial_0\Lambda}\right) = \prod_{y\in
\Lambda\cap V_0 }\mathbb E^0_{\{y\}^c}( K_{\{y\}\cup N_y}^\ast
K_{\{y\}\cup N_y}).
$$
By Lemma \ref{conddensity}, one has $\mathbb E^0_{\{y\}^c}(
K_{\{y\}\cup N_y}^\ast K_{\{y\}\cup N_y})=\id$, hence one gets
$$\mathbb E^0_{(\Lambda\cap V_0)^c}\left(K_{\Lambda\cup\vec\partial_0\Lambda}^\ast K_{\Lambda\cup\vec\partial_0\Lambda}\right)=\id.$$
\end{proof}

The following auxiliary results can be easily proved.

\begin{lemma}
For every $\Lambda_1\subset\Lambda_2\subset V$, one has:
\begin{equation}
\mathbb E^0_{\Lambda_1}\circ \mathbb E^{0}_{\Lambda_2}= \mathbb
E^{0}_{\Lambda_1}
\end{equation}
\end{lemma}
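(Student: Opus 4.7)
The identity to be proved is the standard tower property for the Umegaki conditional expectations associated with the product reference state $\varphi^{0}$. The plan is to exploit three ingredients already established in the excerpt: the multiplicative description $\mathbb E^{0}_{\Lambda^{c}}=\prod_{x\in \Lambda}\mathbb E^{0}_{\{x\}^{c}}$ from \eqref{E0lambda}, the pairwise commutation of the single-site projections provided by Lemma~\ref{cmm}, and the explicit action formula \eqref{E0express}. Combined, they reduce the claim to set-theoretic book-keeping, so I do not anticipate a serious obstacle; the only point requiring mild care is the case in which one of the $\Lambda_{i}$ is infinite, handled at the end.

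First I would check the identity on elementary tensors. Using $\Lambda_{1}\subset\Lambda_{2}$, decompose $V=\Lambda_{1}\sqcup(\Lambda_{2}\setminus\Lambda_{1})\sqcup\Lambda_{2}^{c}$ and take $a=a_{\Lambda_{1}}\otimes a_{\Lambda_{2}\setminus\Lambda_{1}}\otimes a_{\Lambda_{2}^{c}}$. Two applications of \eqref{E0express} yield
\begin{equation*}
\mathbb E^{0}_{\Lambda_{1}}\bigl(\mathbb E^{0}_{\Lambda_{2}}(a)\bigr)
=\mathbb E^{0}_{\Lambda_{1}}\!\bigl(\varphi^{0}_{\Lambda_{2}^{c}}(a_{\Lambda_{2}^{c}})\,a_{\Lambda_{1}}\otimes a_{\Lambda_{2}\setminus\Lambda_{1}}\bigr)
=\varphi^{0}_{\Lambda_{2}^{c}}(a_{\Lambda_{2}^{c}})\,\varphi^{0}_{\Lambda_{2}\setminus\Lambda_{1}}(a_{\Lambda_{2}\setminus\Lambda_{1}})\,a_{\Lambda_{1}}.
\end{equation*}
Since $\varphi^{0}$ is a product state, one has $\varphi^{0}_{\Lambda_{1}^{c}}=\varphi^{0}_{\Lambda_{2}\setminus\Lambda_{1}}\otimes\varphi^{0}_{\Lambda_{2}^{c}}$, so the right-hand side coincides with $\varphi^{0}_{\Lambda_{1}^{c}}(a_{\Lambda_{1}^{c}})\,a_{\Lambda_{1}}=\mathbb E^{0}_{\Lambda_{1}}(a)$, as desired.

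Extension to the whole of $\mathcal A_{V}$ then follows by linearity and norm-continuity, using that localized elements are norm-dense in $\mathcal A_{V}$ and that each $\mathbb E^{0}_{\Lambda}$ is a unital completely positive contraction. An equivalent, slightly slicker route goes directly through \eqref{E0lambda} and Lemma~\ref{cmm}: the composition factors as $\bigl(\prod_{x\in\Lambda_{1}^{c}}\mathbb E^{0}_{\{x\}^{c}}\bigr)\!\circ\!\bigl(\prod_{y\in\Lambda_{2}^{c}}\mathbb E^{0}_{\{y\}^{c}}\bigr)$; because $\Lambda_{2}^{c}\subset\Lambda_{1}^{c}$, every factor indexed by $\Lambda_{2}^{c}$ appears twice and collapses by idempotency of the single-site projection $\mathbb E^{0}_{\{x\}^{c}}$, leaving exactly $\prod_{x\in\Lambda_{1}^{c}}\mathbb E^{0}_{\{x\}^{c}}=\mathbb E^{0}_{\Lambda_{1}}$. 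The only mildly delicate issue is the interpretation of these products when $\Lambda_{i}^{c}$ is infinite, and this is precisely what the remark following Lemma~\ref{E0LamCE} takes care of, by extending \eqref{E0express} to arbitrary (not necessarily finite) subsets of $V$.
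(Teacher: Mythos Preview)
The paper does not actually prove this lemma; it is introduced with the sentence ``The following auxiliary results can be easily proved'' and left to the reader. Your argument is correct and supplies exactly the routine verification the authors omit: checking the identity on elementary tensors via \eqref{E0express} and then passing to $\mathcal A_V$ by linearity and density is the natural way to do it, and your alternative product-of-single-site-projections route is morally the same computation (with the caveat you already flag, that the infinite product in $\prod_{x\in\Lambda_i^{c}}\mathbb E^{0}_{\{x\}^{c}}$ is only a heuristic, so the first approach is the one that is literally rigorous).
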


\begin{lemma}
For $\Lambda, \Lambda'\subset_{fin} V$ with
$\bar{\Lambda}\cap\Lambda'=\emptyset$, one has:
\begin{equation}\label{fact-bar-Lambda1}
K_{(\Lambda\cup\Lambda')\cup \vec{\partial}(\Lambda\cup \Lambda')}
=K_{\Lambda\cup
\vec{\partial_0\Lambda}}K_{\Lambda'\cup\vec{\partial}_0\Lambda'}
\end{equation}
\end{lemma}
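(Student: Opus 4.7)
The plan is to reduce the statement to an elementary index-set splitting in the product defining $K_{\Lambda\cup\vec\partial_0\Lambda}$, together with the commutativity of the family $\{K_{\{y\}\cup N_y}\}_{y\in V_0}$ noted immediately before (\ref{df-K-bar-Lambda}). First I would observe that $\bar\Lambda\cap\Lambda'=\emptyset$ forces $\Lambda\cap\Lambda'=\emptyset$ (since $\Lambda\subseteq\bar\Lambda$), so that
\[
(\Lambda\cup\Lambda')\cap V_0 \;=\; (\Lambda\cap V_0)\,\sqcup\,(\Lambda'\cap V_0)
\]
is a disjoint decomposition of the index set. Applying (\ref{df-K-bar-Lambda}) to the left-hand side and then splitting the resulting product via the commutativity of the $K_{\{y\}\cup N_y}$'s (which in turn follows from the commutativity of $\mathcal K$ and the standing assumption $B_{N_y}\in\mathcal K'$) gives
\[
K_{(\Lambda\cup\Lambda')\cup\vec\partial_0(\Lambda\cup\Lambda')} \;=\; \prod_{y\in\Lambda\cap V_0}K_{\{y\}\cup N_y}\;\cdot\;\prod_{y\in\Lambda'\cap V_0}K_{\{y\}\cup N_y},
\]
and a second application of (\ref{df-K-bar-Lambda}) identifies these two subproducts with $K_{\Lambda\cup\vec\partial_0\Lambda}$ and $K_{\Lambda'\cup\vec\partial_0\Lambda'}$ respectively.

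To make the subscript on the left-hand side fully consistent with those on the right, I would also record the set identity $\vec\partial_0(\Lambda\cup\Lambda')=\vec\partial_0\Lambda\cup\vec\partial_0\Lambda'$. By (\ref{df-dext0-Lambda}) this reduces to $\partial(\Lambda\cup\Lambda')=\partial\Lambda\cup\partial\Lambda'$. The nontrivial inclusion is that any $x\in\partial\Lambda$ still lies in $\partial(\Lambda\cup\Lambda')$: if $y\in\Lambda^c$ witnesses $x\in\partial\Lambda$ via $x\sim y$, then $y$ cannot lie in $\Lambda'$, for otherwise $y\in\vec\partial\Lambda\subseteq\bar\Lambda$ would contradict $\bar\Lambda\cap\Lambda'=\emptyset$. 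This is the precise step at which the stronger hypothesis $\bar\Lambda\cap\Lambda'=\emptyset$, rather than just $\Lambda\cap\Lambda'=\emptyset$, is needed.

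I do not expect any real obstacle here: the lemma is essentially the multiplicativity of the product (\ref{df-K-bar-Lambda}) over disjoint unions, and the freedom to rearrange the factors is already packaged into the standing commutativity assumption on $\mathcal K$. The only mild care is isolating where the separation hypothesis $\bar\Lambda\cap\Lambda'=\emptyset$ enters, namely in the boundary-set identity above that aligns the outer-boundary labels on the two sides.
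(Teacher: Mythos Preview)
Your argument is correct and is exactly the natural one. The paper does not actually give a proof of this lemma---it is presented among the ``auxiliary results [that] can be easily proved''---so there is nothing to compare against; the index-set splitting $(\Lambda\cup\Lambda')\cap V_0=(\Lambda\cap V_0)\sqcup(\Lambda'\cap V_0)$ together with the commutativity recorded before (\ref{df-K-bar-Lambda}) is precisely what the authors have in mind.

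Two minor remarks. First, since by definition (\ref{df-K-bar-Lambda}) the operator $K_{\Lambda\cup\vec\partial_0\Lambda}$ depends only on $\Lambda\cap V_0$, the operator identity already follows from the index-set splitting alone; your boundary discussion is only needed to match the subscript labels (and indeed the paper's own left-hand subscript carries a typo, $\vec\partial$ for $\vec\partial_0$). Second, the hypothesis $\bar\Lambda\cap\Lambda'=\emptyset$ is not symmetric in $\Lambda,\Lambda'$, so the companion inclusion $\partial\Lambda'\subseteq\partial(\Lambda\cup\Lambda')$ deserves its own line: if $x\in\partial\Lambda'$ had its witnessing neighbour $y$ in $\Lambda$, then $x\in\vec\partial\Lambda\subseteq\bar\Lambda$, contradicting $x\in\Lambda'$ and $\bar\Lambda\cap\Lambda'=\emptyset$.
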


\begin{theorem}\label{E0LbLb0}
For $\Lambda_0 \subseteq \bar{\Lambda}_0 \subset \Lambda$, one has:
\begin{enumerate}
\item[(i)] For $z\in V_0 \cap ( \Lambda\setminus \bar\Lambda_{0})$
\begin{equation}\label{Ezc}
\mathbb E^{0}_{\{z\}^c} (K_{\Lambda\cup \vec\partial_0\Lambda}^\ast
a_{\Lambda_0} K_{\Lambda\cup\vec\partial_0\Lambda}) = K_{(\Lambda
\setminus \{z\})\cup\vec\partial_0 (\Lambda \setminus \{z\}) }^\ast
a_{\Lambda_0} K_{(\Lambda \setminus \{z\})\cup\vec\partial_0
(\Lambda \setminus \{z\}) }
\end{equation}
for every $a_{\Lambda_0}\in \mathcal{A}_{\Lambda_0}$;
\item[(ii)]
\begin{equation}\label{pre-MP2}
\mathbb E^{0}_{(\Lambda\setminus \bar\Lambda_0)\cap V_0}
(K_{\Lambda\cup \vec\partial_0 \Lambda}^\ast a_{\Lambda_0}
K_{\Lambda\cup \vec\partial_0\Lambda}) =K_{\bar{\Lambda}_0\cup
\vec\partial_0(\bar\Lambda_0)}^\ast a_{\Lambda_0}
K_{\bar{\Lambda}_0\cup \vec\partial_0(\bar\Lambda_0)}
\end{equation}
for every $a_{\Lambda_0}\in \mathcal{A}_{\Lambda_0}$.
\end{enumerate}
\end{theorem}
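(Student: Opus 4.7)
The plan is to prove \textbf{(i)} directly and deduce \textbf{(ii)} by iterating \textbf{(i)}.

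For \textbf{(i)}, the key step is to isolate the factor corresponding to $z$ in the product defining $K_{\Lambda\cup\vec\partial_0\Lambda}$. Since the family $\{K_{\{y\}\cup N_y}\}_{y\in V_0}$ lies in the commutative $C^*$-algebra $\mathcal K$, one can write
\[
K_{\Lambda\cup\vec\partial_0\Lambda}=K_{\{z\}\cup N_z}\cdot K_{(\Lambda\setminus\{z\})\cup\vec\partial_0(\Lambda\setminus\{z\})}.
\]
The essential tessellation property (Remark after \eqref{VinftyV}: distinct elements of $V_0$ do not lie in one another's plaquettes) forces $K_{(\Lambda\setminus\{z\})\cup\vec\partial_0(\Lambda\setminus\{z\})}$ to be localized in $\mathcal A_{\{z\}^c}$, so it commutes with every element of $\mathcal A_{\{z\}}$ and can be pulled out of $\mathbb E^0_{\{z\}^c}$ by the module property. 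Moreover, the hypothesis $z\in\Lambda\setminus\bar\Lambda_0$ gives both $z\notin\Lambda_0$ and $N_z\cap\Lambda_0=\emptyset$, so $(\{z\}\cup N_z)\cap\Lambda_0=\emptyset$ and $a_{\Lambda_0}$ commutes with the isolated factor $K_{\{z\}\cup N_z}$.

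Combining these observations, the left-hand side of \eqref{Ezc} reduces to
$K_{(\Lambda\setminus\{z\})\cup\vec\partial_0(\Lambda\setminus\{z\})}^{*}\,a_{\Lambda_0}\,\mathbb E^0_{\{z\}^c}(K_{\{z\}\cup N_z}^{*}K_{\{z\}\cup N_z})\,K_{(\Lambda\setminus\{z\})\cup\vec\partial_0(\Lambda\setminus\{z\})}$, and the middle factor equals $\id$ by Lemma~\ref{conddensity}, proving \textbf{(i)}. For \textbf{(ii)}, I enumerate the finite set $(\Lambda\setminus\bar\Lambda_0)\cap V_0=\{z_1,\dots,z_k\}$ and, using \eqref{E0lambda} and Lemma~\ref{cmm}, factor the conditional expectation as the composition $\mathbb E^0_{\{z_1\}^c}\circ\cdots\circ\mathbb E^0_{\{z_k\}^c}$. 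Applying \textbf{(i)} at step $j$ with $\Lambda$ replaced by $\Lambda\setminus\{z_1,\dots,z_{j-1}\}$ — the hypotheses remain valid since $\bar\Lambda_0$ is disjoint from each $z_i$ — strips off the factors one at a time. After $k$ iterations the remaining operator is $\prod_{y\in\bar\Lambda_0\cap V_0}K_{\{y\}\cup N_y}=K_{\bar\Lambda_0\cup\vec\partial_0\bar\Lambda_0}$, where I use $(\Lambda\setminus\{z_1,\dots,z_k\})\cap V_0=\bar\Lambda_0\cap V_0$, a consequence of $\bar\Lambda_0\subset\Lambda$.

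The conceptual pivot, and the only nontrivial point, is the first step of \textbf{(i)}: verifying simultaneously that removing $z$ leaves behind an operator localized in $\{z\}^c$ and that the plaquette of $z$ is disjoint from $\Lambda_0$. These rely on two separate structural inputs — the tessellation property of $V_0$ and the hypothesis $z\in\Lambda\setminus\bar\Lambda_0$ — after which the remainder is routine manipulation of tensor-factorized conditional expectations combined with Lemma~\ref{conddensity}.
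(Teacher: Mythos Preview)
Your proposal is correct and follows essentially the same route as the paper: factor off $K_{\{z\}\cup N_z}$ using commutativity, use disjoint localization to commute it past $a_{\Lambda_0}$ and pull the remaining factors through $\mathbb E^0_{\{z\}^c}$, apply Lemma~\ref{conddensity}, then iterate for part (ii). One small inaccuracy: the operators $K_{\{y\}\cup N_y}$ need not lie in $\mathcal K$ itself (they involve $B_{N_y}^{-1/2}\in\mathcal K'$), but the paper establishes directly that the family $\{K_{\{y\}\cup N_y},K_{\{y\}\cup N_y}^*\}$ is commutative, which is all you actually use.
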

\begin{proof} (i) For a general $\Lambda_0$, if $ z \in (\Lambda \setminus \bar{\Lambda}_0)\cap V_0$, then $N_z$ can
intersect $\vec{\partial} \Lambda_0$, but not $\Lambda_0$.
Therefore, $K_{\{z\}\cup N_z}$ and $a_{\Lambda_0}$ are localized on
disjoint parts, so they commute. Due to the commutativity of
 $\{ K_{\{y\} \cup N_y}, K_{\{y\} \cup N_y}^\ast\, \mid y\in \Lambda\cap V_0\} $ it follows from \eqref{fact-bar-Lambda1} that
\begin{eqnarray*}
&&\mathbb E^{0}_{\{z\}^c} (K_{\Lambda\cup \vec\partial_0 \Lambda}^\ast a_{\Lambda_0} K_{\Lambda\cup \vec\partial_0\Lambda})\\
&=&\mathbb E^{0}_{\{z\}^c} (\prod_{y \in \Lambda\cap V_0} K_{\{y\} \cup N_y}^\ast a_{\Lambda_0}  \prod_{y \in \Lambda\cap V_0}K_{\{y\} \cup N_y})\\
&=& \mathbb E^{0}_{\{z\}^c}\left((K_{\{z\} \cup N_z}^\ast K_{\{y\} \cup N_y})\times (K_{(\Lambda\setminus\{z\})\cup \vec\partial_0 (\Lambda\setminus\{z\})}^\ast a_{\Lambda_0}K_{(\Lambda\setminus\{z\})\cup \vec\partial_0 (\Lambda\setminus\{z\})})\right)\\
&=&\mathbb E^{0}_{\{z\}^c} \left(K_{\{z\} \cup N_z}^\ast K_{\{y\}
\cup N_y}\right) K_{(\Lambda\setminus\{z\})\cup \vec\partial_0
(\Lambda\setminus\{z\})}^\ast
a_{\Lambda_0}K_{(\Lambda\setminus\{z\})\cup \vec\partial_0
(\Lambda\setminus\{z\})}
\end{eqnarray*}
and by Lemma \ref{conddensity} one has
 $$\mathbb E^{0}_{\{z\}^c} \left(K_{\{z\} \cup N_z}^\ast K_{\{y\} \cup N_y}\right)=\id$$
Hence, we get
$$\mathbb E^{0}_{\{z\}^c} (K_{\Lambda\cup \vec\partial_0 \Lambda}^\ast a_{\Lambda_0} K_{\Lambda\cup \vec\partial_0\Lambda})=
K_{(\Lambda\setminus\{z\})\cup \vec\partial_0
(\Lambda\setminus\{z\})}^\ast
a_{\Lambda_0}K_{(\Lambda\setminus\{z\})\cup \vec\partial_0
(\Lambda\setminus\{z\})}$$

(ii)  Iterating the procedure of (\ref{Ezc}) to cover all $z\in
(\Lambda \setminus \bar{\Lambda}_0)\cap V_0$ one finds
\begin{eqnarray*}
\mathbb E^{0}_{(\Lambda\setminus \Lambda_0)\cap V_0}
\left(K_{\Lambda\cup \vec\partial_0 \Lambda}^\ast a_{\Lambda_0}
K_{\Lambda\cup \vec\partial_0\Lambda}\right) &=&\bigg(\prod_{z\in
(\Lambda\setminus \bar\Lambda_0)\cap V_0 }\mathbb
E^{0}_{\{z\}^c}\bigg) \left(K_{\Lambda\cup \vec\partial_0
\Lambda}^\ast a_{\Lambda_0} K_{\Lambda\cup
\vec\partial_0\Lambda}\right)\\[2mm]
&=&K_{\bar{\Lambda}_0\cup \vec\partial_0(\bar\Lambda_0)}^\ast
a_{\Lambda_0} K_{\bar{\Lambda}_0\cup \vec\partial_0(\bar\Lambda_0)}
\end{eqnarray*}
This completes the proof.
\end{proof}

\begin{remark}
Keeping the notations of Theorem \ref{E0LbLb0}, if $\vec\partial
\Lambda_0\cap V_0 =\emptyset$ then using the same argument, one gets
$$\mathbb E^0_{\Lambda\setminus\Lambda_0}(K_{\Lambda\cup\vec\partial_0\Lambda}^\ast a_{\Lambda_0}K_{\Lambda\cup\vec\partial_0\Lambda}) =
K_{\Lambda_0\cup\vec\partial_0\Lambda_0}^\ast
a_{\Lambda_0}K_{\Lambda_0\cup\vec\partial_0\Lambda_0} $$ for every
$a_0\in\mathcal A_{\Lambda_0}$.
\end{remark}

\section{Main result}

In this section, we prove a main result of the paper. First we need
an auxiliary result.

\begin{proposition}
Let $\Lambda_1, \Lambda_2\in\mathcal F$ with
$\Lambda_1\subset\subset\Lambda_2$. Define
\begin{equation}\label{quasiexp12}
E_{\Lambda_1,\Lambda_2}(a) = \mathbb E^0_{(\Lambda_2\setminus\bar\Lambda_1)^c}
\left(K_{(\Lambda_2\setminus\bar\Lambda_1)\cup\vec\partial_0 (\Lambda_2\setminus\bar\Lambda_1)}^\ast
 a K_{(\Lambda_2\setminus\bar\Lambda_1)\cup\vec\partial_0 (\Lambda_2\setminus\bar\Lambda_1)}\right)
\end{equation}
for $a\in \mathcal A_V$. Then $E_{\Lambda_1,\Lambda_2}$ is a
quasi-conditional expectation w.r.t. the triplet $\mathcal
A_{\Lambda_1}\subset \mathcal A_{\bar\Lambda_1} \subset\mathcal
A_{\Lambda_2}$.
\end{proposition}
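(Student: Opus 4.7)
The plan is to verify directly the three defining properties (complete positivity, unitality, and the $\mathcal A_{\Lambda_1}$-bimodule identity) of a quasi-conditional expectation. For brevity I would write $\Delta := \Lambda_2 \setminus \bar\Lambda_1$ and $K := K_{\Delta \cup \vec\partial_0 \Delta}$, so that the definition reads
$$E_{\Lambda_1,\Lambda_2}(a) = \mathbb E^0_{\Delta^c}(K^{*} a K).$$

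Complete positivity is immediate: $a \mapsto K^{*} a K$ is an elementary Kraus map and $\mathbb E^0_{\Delta^c}$ is a Umegaki conditional expectation, both CP, so their composition is CP. For unitality, Theorem \ref{K-Lambda-density} gives $\mathbb E^0_{(\Delta \cap V_0)^c}(K^{*} K) = \id$. Since $\Delta \cap V_0 \subseteq \Delta$ implies $\mathcal A_{\Delta^c} \subseteq \mathcal A_{(\Delta \cap V_0)^c}$, the tower property of the Umegaki conditional expectations associated with the product state $\varphi^0$ yields $\mathbb E^0_{\Delta^c} \circ \mathbb E^0_{(\Delta \cap V_0)^c} = \mathbb E^0_{\Delta^c}$, so
$$E_{\Lambda_1,\Lambda_2}(\id) = \mathbb E^0_{\Delta^c}\bigl(\mathbb E^0_{(\Delta \cap V_0)^c}(K^{*} K)\bigr) = \mathbb E^0_{\Delta^c}(\id) = \id.$$

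The heart of the argument is the bimodule identity $E_{\Lambda_1,\Lambda_2}(ca) = c\, E_{\Lambda_1,\Lambda_2}(a)$ for $c \in \mathcal A_{\Lambda_1}$. I would first establish the combinatorial disjointness $\Lambda_1 \cap (\Delta \cup \vec\partial_0 \Delta) = \emptyset$: the first intersection is empty since $\Delta \subseteq \bar\Lambda_1^{\,c} \subseteq \Lambda_1^c$, while for any $y \in \partial \Delta \cap V_0$ one has $y \in \Delta$, hence $y \notin \bar\Lambda_1$, so no neighbor of $y$ can lie in $\Lambda_1$ (otherwise $y \in \vec\partial \Lambda_1 \subseteq \bar\Lambda_1$), giving $N_y \cap \Lambda_1 = \emptyset$ and thus $\vec\partial_0 \Delta \cap \Lambda_1 = \emptyset$. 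This disjointness immediately implies that any $c \in \mathcal A_{\Lambda_1}$ commutes with both $K$ and $K^{*}$. Moreover $c \in \mathcal A_{\Lambda_1} \subseteq \mathcal A_{\bar\Lambda_1} \subseteq \mathcal A_{\Delta^c}$ (using $\bar\Lambda_1 \cap \Delta = \emptyset$), so $c$ lies in the fixed-point algebra of $\mathbb E^0_{\Delta^c}$. Combining these two facts with the $\mathcal A_{\Delta^c}$-bimodule property of the Umegaki CE gives
$$E_{\Lambda_1,\Lambda_2}(ca) = \mathbb E^0_{\Delta^c}(K^{*} c a K) = \mathbb E^0_{\Delta^c}(c K^{*} a K) = c\, \mathbb E^0_{\Delta^c}(K^{*} a K) = c\, E_{\Lambda_1,\Lambda_2}(a).$$

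The main obstacle is this disjointness bookkeeping for the support of $K$ relative to $\Lambda_1$, which is what simultaneously unlocks $[c,K]=0$ and the extraction of $c$ past $\mathbb E^0_{\Delta^c}$; once it is in place, the rest reduces to standard Kraus/tower properties. That $E_{\Lambda_1,\Lambda_2}$ sends $\mathcal A_{\Lambda_2}$ into $\mathcal A_{\bar\Lambda_1}$ then follows from $\mathbb E^0_{\Delta^c}$ projecting onto $\mathcal A_{\Delta^c}$ together with $K$ being supported inside $\bar\Delta \subseteq \Lambda_2$ (the latter under the tessellation-compatibility assumption $\vec\partial \Lambda_i \cap V_0 = \emptyset$ used throughout the paper), so that $E_{\Lambda_1,\Lambda_2}(\mathcal A_{\Lambda_2}) \subseteq \mathcal A_{\Delta^c} \cap \mathcal A_{\Lambda_2} = \mathcal A_{\bar\Lambda_1}$.
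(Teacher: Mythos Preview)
Your proof is correct and follows essentially the same route as the paper: verify unitality via Theorem~\ref{K-Lambda-density} and the tower property, complete positivity as a composition of CP maps, and the $\mathcal A_{\Lambda_1}$-module identity via commutation of $c$ with $K$ together with the bimodule property of the Umegaki expectation. Your argument for complete positivity (single Kraus map composed with $\mathbb E^0_{\Delta^c}$) is a bit more direct than the paper's, which instead factors $E_{\Lambda_1,\Lambda_2}$ as a product of the elementary maps $a\mapsto \mathbb E^0_{\{y\}^c}(K_{\{y\}\cup N_y}^*\,a\,K_{\{y\}\cup N_y})$ using the commutativity of the $K_{\{y\}\cup N_y}$; and you spell out the disjointness $\Lambda_1\cap(\Delta\cup\vec\partial_0\Delta)=\emptyset$ explicitly, whereas the paper merely asserts the commutation.
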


\begin{proof}
The map $E_{\Lambda_1,\Lambda_2}$ is clearly linear and valued in
$\mathcal A_{\bar\Lambda_1}$.

\textbf{Unitality}: using commutativity of the family $\{ \mathbb
E_{\{z\}^c}\,\mid z\in (\Lambda_2\setminus\Lambda_1)\cap V_0\}$ (by
Lemma \ref{cmm}), one can write
 \begin{equation}\label{decompose-E-Lb12}
 \mathbb E^0_{(\Lambda_2\setminus\Lambda_1)^c}= \mathbb E^{0}_{((\Lambda_2\setminus\bar\Lambda_1)\cap V^c_0 )^c}\circ \mathbb E^{0}_{((\Lambda_2\setminus\Lambda_1)\cap V_0 )^c}
 \end{equation}
 and using Theorem \ref{K-Lambda-density} for $\Lambda =\Lambda_2\setminus\bar \Lambda_1$
 we obtain
 $$\mathbb E^{0}_{((\Lambda_2\setminus\bar\Lambda_1)\cap V_0 )^c}
 \left(K_{(\Lambda_2\setminus\bar\Lambda_1)\cup\vec\partial_0(\Lambda_2\setminus\bar\Lambda_1)}^\ast K_{(\Lambda_2\setminus\bar\Lambda_1)\cup\vec\partial_0(\Lambda_2\setminus\bar\Lambda_1)}\right)= \id$$
 then using (\ref{decompose-E-Lb12}) one finds
 $$\mathbb E^0_{(\Lambda_2\setminus\bar\Lambda_1)^c}
 \left(K_{(\Lambda_2\setminus\bar\Lambda_1)\cup\vec\partial_0(\Lambda_2\setminus\bar\Lambda_1)}^\ast K_{(\Lambda_2\setminus\bar\Lambda_1)\cup\vec\partial_0(\Lambda_2\setminus\bar\Lambda_1)}\right)
 = \mathbb E^{0}_{((\Lambda_2\setminus\bar\Lambda_1)\cap V^c_0 )^c}(\id) =\id$$
hence, $$E_{\Lambda_1,\Lambda_2}(\id)=\id.$$

\textbf{Complete positivity:}  One can check that for any $y\in V_0$
the map
$$a\mapsto E_{\{y\}^c} (a):= \mathbb E^0_{\{y\}^c}(K^*_{\{y\}\cup N_y}a K_{\{y\}\cup N_y})$$
is completely positive. Now using the commutativity of the set $\{
K_{\{y\}\cup N_y}, y\in V_0\}$ one gets
$$E_{\Lambda_1,\Lambda_2} = \mathbb E^0_{(\Lambda_2\setminus\bar\Lambda_1)\cap V_0}
\circ \bigg(\prod_{y\in (\Lambda_2\setminus\bar\Lambda_1)\cap V_0}
E_{\{y\}^c} \bigg).$$ Hence  $E_{\Lambda_1, \Lambda_2}$ is the
composition of completely positive maps, therefore, it is so.

Let $a\in \mathcal A_{\Lambda_2}, c\in\mathcal A_{\Lambda_1}$, while
$K_{(\Lambda_2\setminus\Lambda_1)\cup\vec\partial_0(\Lambda_2\setminus\Lambda_1)}^\ast
\in \mathcal A_{\bar(\Lambda_2\setminus\Lambda_1)}$ then it commutes
with $c$, then using the fact that
$$\mathbb E^{0}_{(\Lambda_2\setminus\Lambda_1)^c}(cd)=c\mathbb E^{0}_{(\Lambda_2\setminus\Lambda_1)^c}(d)$$
 for every $d\in\mathcal A$, one gets
\begin{eqnarray*}
E_{\Lambda_1, \Lambda_2}(ca)&=& \mathbb E^{0}_{(\Lambda_2\setminus\bar\Lambda_1)^c}
 \left(K_{(\Lambda_2\setminus\bar\Lambda_1)\cup\vec\partial_0(\Lambda_2\setminus\bar\Lambda_1)}^\ast ca
 K_{(\Lambda_2\setminus\bar\Lambda_1)\cup\vec\partial_0(\Lambda_2\setminus\bar\Lambda_1)}\right)\\
 &=&\mathbb E^{0}_{(\Lambda_2\setminus\bar\Lambda_1)^c}
 \left(c K_{(\Lambda_2\setminus\bar\Lambda_1)\cup\vec\partial_0(\Lambda_2\setminus\bar\Lambda_1)}^\ast a
 K_{(\Lambda_2\setminus\bar\Lambda_1)\cup\vec\partial_0(\Lambda_2\setminus\bar\Lambda_1)}\right)\\
 &=&  c \mathbb E^{0}_{(\Lambda_2\setminus\bar\Lambda_1)^c}
 \left(K_{(\Lambda_2\setminus\bar\Lambda_1)\cup\vec\partial_0(\Lambda_2\setminus\bar\Lambda_1)}^\ast a
 K_{(\Lambda_2\setminus\bar\Lambda_1)\cup\vec\partial_0(\Lambda_2\setminus\bar\Lambda_1)}\right)\\
 &=&c E_{\Lambda_1, \Lambda_2}(a).
\end{eqnarray*}
Hence, $E_{\Lambda_1, \Lambda_2}$ is a quasi-conditional expectation
w.r.t. the given triplet. This completes the proof.
\end{proof}

Now we pass to our main result.

\begin{theorem}\label{main}
For each $\Lambda\in \mathcal F$ define the state
$\widetilde{\varphi}_{\Lambda}$ on $\mathcal A$ by
\begin{equation}
\widetilde{\varphi}_{\Lambda}(a):=
\varphi^{0}(K_{\Lambda\cup\vec\partial_0\Lambda}^\ast a
K_{\Lambda\cup\vec\partial_0\Lambda})
\end{equation}
Then the net $\{\widetilde{\varphi}_{\Lambda}\}_{\Lambda\in\mathcal
F}$ converges in the weak-*-topology, moreover the limiting state
$\varphi$ is a backward Markov field on $\mathcal A_V$ w.r.t. the
tessellation $V_0$.
\end{theorem}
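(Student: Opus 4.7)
The plan is to establish three items in sequence: that each $\widetilde{\varphi}_{\Lambda}$ is a state, that the net stabilizes on the local algebra and hence converges in the weak-$*$ topology, and that the limit satisfies Definition \ref{defQ-Markov-field-tess}.

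First, I would check that each $\widetilde{\varphi}_{\Lambda}$ is a state. Positivity is immediate from the form $\varphi^{0}(K^{*}\cdot K)$, and unitality follows by combining Theorem \ref{K-Lambda-density} with the invariance $\varphi^{0}\circ\mathbb{E}^{0}_{W^{c}}=\varphi^{0}$ of the product state under the Umegaki expectations:
\begin{equation*}
\widetilde{\varphi}_{\Lambda}(\id)=\varphi^{0}\bigl(\mathbb{E}^{0}_{(\Lambda\cap V_{0})^{c}}(K_{\Lambda\cup\vec\partial_{0}\Lambda}^{*}K_{\Lambda\cup\vec\partial_{0}\Lambda})\bigr)=\varphi^{0}(\id)=1.
\end{equation*}

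Second, to obtain convergence, I would fix $a\in\mathcal{A}_{\Lambda_{0}}$ with $\Lambda_{0}\in\mathcal{F}$, take any $\Lambda\in\mathcal{F}$ with $\bar{\Lambda}_{0}\subset\Lambda$, and apply $\varphi^{0}=\varphi^{0}\circ\mathbb{E}^{0}_{((\Lambda\setminus\bar\Lambda_{0})\cap V_{0})^{c}}$ followed by Theorem \ref{E0LbLb0}(ii). This collapses $\widetilde{\varphi}_{\Lambda}(a)$ to $\widetilde{\varphi}_{\bar\Lambda_{0}}(a)$, independent of $\Lambda$. Hence the pointwise limit $\varphi(a):=\lim_{\Lambda}\widetilde{\varphi}_{\Lambda}(a)$ exists on the local algebra $\mathcal{A}_{V,loc}$ and, by norm continuity of the functionals together with step~1, extends uniquely to a state $\varphi$ on $\mathcal{A}_{V}$.

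Third, for the Markov property, I would take any sequence $\{\Lambda_{n}\}_{n}$ satisfying the tessellation condition \eqref{tesselation_assumption}, choose the state $\varphi_{\Lambda_{0}}$ as the restriction of $\varphi$ to $\mathcal{A}_{\bar\Lambda_{0}}$, and let $E_{\Lambda_{n},\Lambda_{n+1}}$ be the quasi-conditional expectations of the preceding proposition. I would then prove by induction on $n$ the identity
\begin{equation*}
E_{\Lambda_{0},\Lambda_{1}}\circ\cdots\circ E_{\Lambda_{n},\Lambda_{n+1}}(a)=\mathbb{E}^{0}_{W_{n}^{c}}\bigl(K_{\Lambda_{n+1}\cup\vec\partial_{0}\Lambda_{n+1}}^{*}\,a\,K_{\Lambda_{n+1}\cup\vec\partial_{0}\Lambda_{n+1}}\bigr)
\end{equation*}
for every $a\in\mathcal{A}_{\Lambda_{n+1}}$, where $W_{n}$ is the portion of $V_{0}$ being traced out at stage $n$. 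The inductive step uses the factorization lemma \eqref{fact-bar-Lambda1} to split $K_{\Lambda_{n+1}\cup\vec\partial_{0}\Lambda_{n+1}}$ as a product of $K_{\bar\Lambda_{n}\cup\vec\partial_{0}\bar\Lambda_{n}}$ with $K_{(\Lambda_{n+1}\setminus\bar\Lambda_{n})\cup\vec\partial_{0}(\Lambda_{n+1}\setminus\bar\Lambda_{n})}$; this is legitimate precisely because $\vec\partial\Lambda_{n}\cap V_{0}=\emptyset$ enforces the disjointness hypothesis of that lemma. Composing with $\varphi_{\Lambda_{0}}$ and invoking $\varphi^{0}\circ\mathbb{E}^{0}=\varphi^{0}$ reduces the entire composition to $\widetilde{\varphi}_{\Lambda_{n+1}}(a)$, which by step~2 converges to $\varphi(a)$ for local $a$, hence on all of $\mathcal{A}_{V}$ by a $3\varepsilon$ argument.

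The main obstacle is the bookkeeping in step~3: one must juggle the commutativity of the family $\{K_{\{y\}\cup N_{y}}\}_{y\in V_{0}}$, the module property of the quasi-conditional expectations, the invariance of $\varphi^{0}$ under partial $\mathbb{E}^{0}$-traces, and the tessellation constraint $\vec\partial\Lambda_{n}\cap V_{0}=\emptyset$ simultaneously, in order to guarantee that \eqref{fact-bar-Lambda1} remains applicable at every stage of the recursion and that no $V_{0}$-plaquette is split between $\bar\Lambda_{n}$ and its complement.
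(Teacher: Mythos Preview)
Your proposal is correct and follows essentially the same route as the paper: the stabilization argument in step~2 is identical (invoke $\varphi^{0}=\varphi^{0}\circ\mathbb{E}^{0}$ and Theorem~\ref{E0LbLb0} to reduce $\widetilde{\varphi}_{\Lambda}(a)$ to $\widetilde{\varphi}_{\bar\Lambda_{0}}(a)$), and step~3 uses the same ingredients---the factorization \eqref{fact-bar-Lambda1}, the tessellation hypothesis $\vec\partial\Lambda_{n}\cap V_{0}=\emptyset$ to guarantee $\bar\Lambda_{n}\cap V_{0}=\Lambda_{n}\cap V_{0}$, and the invariance of $\varphi^{0}$ under $\mathbb{E}^{0}$.

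The only organizational difference is in step~3. You propose an operator-level inductive identity for the full composition $E_{\Lambda_{0},\Lambda_{1}}\circ\cdots\circ E_{\Lambda_{n},\Lambda_{n+1}}$, tracking an auxiliary set $W_{n}$, and then apply $\varphi_{\Lambda_{0}}$ at the end. The paper instead establishes the single-step identity
\[
\widetilde{\varphi}_{\Lambda_{n}}\circ E_{\Lambda_{n},\Lambda_{n+1}}=\widetilde{\varphi}_{\Lambda_{n+1}}
\]
directly at the state level and iterates it. The paper's formulation is slightly cleaner because it avoids having to keep track of which region the intermediate operator actually lives in (your right-hand side lands in $\mathcal{A}_{\bar\Lambda_{0}\cup(\bar\Lambda_{n+1}\setminus\Lambda_{n+1})}$ rather than $\mathcal{A}_{\bar\Lambda_{0}}$, a harmless but awkward mismatch that disappears once $\varphi^{0}$ is applied). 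Both arguments are sound and rest on the same lemmas; yours simply shifts the telescoping from states to operators.
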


\begin{proof} First we prove the existence of the limit. Due to the density argument, it is sufficient to establish the existence
of the limit in the local algebra $\mathcal A_{V, loc}$.

Let $a\in\mathcal A_{V, loc}$ then $a\in\mathcal A_{\Lambda_0}$ for
some $\Lambda_0\in\mathcal F$. For $\Lambda\in \mathcal F$ with
$\Lambda_0\subset\subset \Lambda$, we have
\begin{eqnarray*}
\widetilde{\varphi}_{\Lambda}(a)
&=& \varphi^{0}\left(K_{\Lambda\cup\vec\partial_0\Lambda}^\ast a K_{\Lambda\cup\vec\partial_0\Lambda}\right)\\
&=& \varphi^{0}\circ \mathbb E_{(\Lambda\setminus\bar\Lambda_0)^c}
\left(K_{\Lambda\cup\vec\partial_0\Lambda}^\ast a K_{\Lambda\cup\vec\partial_0\Lambda}\right)
\end{eqnarray*}
and by Theorem \ref{E0LbLb0} one gets
$$\mathbb
E_{(\Lambda\setminus\bar\Lambda_0)^c}
\left(K_{\Lambda\cup\vec\partial_0\Lambda}^\ast a
K_{\Lambda\cup\vec\partial_0\Lambda}\right) =
K_{\bar\Lambda_0\cup\vec\partial_0\bar\Lambda_0}^\ast a
K_{\bar\Lambda_0\cup\vec\partial_0\bar\Lambda_0},$$ so
\begin{eqnarray*}
\widetilde{\varphi}_{\Lambda}(a) &= &\varphi^{0}(K_{\bar\Lambda_0\cup\vec\partial_0\bar\Lambda_0}^\ast
 a K_{\bar\Lambda_0\cup\vec\partial_0\bar\Lambda_0})\\
&=& \widetilde{\varphi}_{\bar\Lambda_0}(a).
\end{eqnarray*}
As $\Lambda\to V$, we find that $\Lambda_0\subset\subset\Lambda$ up
to some order, hence the net
$\{\widetilde{\varphi}(a)\}_{\Lambda\in\mathcal F;
\Lambda_0\subset\subset \Lambda}$ is stationary. This means that
 \begin{equation}
 \lim_{\Lambda\to V; \Lambda_0\subset\subset \Lambda}\widetilde{\varphi}_{\Lambda}(a) = \varphi_{\bar\Lambda_0}(a) =: \varphi(a)
 \end{equation}
therefor the limit exist on the local algebra, and yet it exists on
the full algebra $\mathcal A_V$.

Now we establish that the state $\varphi$ is a quantum Markov field.

Let $\{\Lambda_n \ |\  n\in \mathbb N\}_{n\in \mathbb N}$ be a
family of subset of $\mathcal F$ satisfying
$$\Lambda_n\subset\subset \Lambda_{n+1}, \quad  \vec\partial \Lambda_n\cap V_0 =\emptyset$$
Let  $E_{\Lambda_n, \Lambda_n+1}$ be given by \eqref{quasiexp12}.
Then,  for $a\in\mathcal A_{V_n} $, we have
\begin{eqnarray*}
&&\widetilde\varphi_{\Lambda_{n}}\circ E_{\Lambda_{n}, \Lambda_{n+1}}(a)\\
&=& \varphi^0\left(K_{\Lambda_{n}\cup\vec\partial_0\Lambda_n}^\ast E_{\Lambda_n,\Lambda_{n+1}}(a) K_{\Lambda_n\cup\vec\partial_0\Lambda_n}\right)\\
&=& \varphi^0\left(K_{\Lambda_n\cup\vec\partial_0\Lambda_n}^\ast
 \mathbb E^0_{(\Lambda_{n+1}\setminus \bar\Lambda_{n})^c}\left(K_{(\Lambda_{n+1}\setminus\bar\Lambda_{n})\cup\vec\partial_0 (\Lambda_{n+1}\setminus\bar\Lambda_{n})}^\ast a K_{(\Lambda_{n+1}\setminus\bar\Lambda_{n})\cup\vec\partial_0 (\Lambda_{n+1}\setminus\bar\Lambda_{n})}\right)
 K_{\Lambda_{n}\cup\vec\partial_0\Lambda_{n}}\right)\\
 \end{eqnarray*}
Since  $K_{\Lambda_{n}\cup\vec\partial_0\Lambda_{n}}\in \mathcal
A_{\bar\Lambda_n}
 \subset \mathcal A_{(\Lambda_{n+1}\setminus\bar\Lambda_{n})^c}$ and
  $\mathbb E^0_{(\Lambda_{n+1}\setminus\bar\Lambda_{n})^c}$ is a Umegaki
   conditional expectation from $\mathcal A_V$ onto $\mathcal A_{(\Lambda_{n+1}\setminus\bar\Lambda_{n})^c}$
  then one finds
\begin{eqnarray*}
&&\widetilde\varphi_{\Lambda_n}\circ E_{\Lambda_n, \Lambda_{n+1}}(a)\\
&=& \varphi^0 \mathbb E^0_{(\Lambda_{n+1}\setminus
\bar\Lambda_{n})^c}\left(K_{\Lambda_{n}\cup\vec\partial_0\Lambda_{n}}^\ast
K_{(\Lambda_{n+1}\setminus\bar\Lambda_{n})\cup\vec\partial_0
 (\Lambda_{n+1}\setminus\bar\Lambda_{n})}^\ast a K_{(\Lambda_{n+1}\setminus\bar\Lambda_{n})\cup\vec\partial_0
  (\Lambda_{n+1}\setminus\bar\Lambda_n)} K_{\Lambda_{n}\cup\vec\partial_0\Lambda_{n}}\right)
  \end{eqnarray*}
  and by the assumption (\ref{tesselation_assumption}) one has
  $$\vec\partial \Lambda_{n}\cap V_0=\emptyset$$
  then $\bar\Lambda_n\cap V_0=\Lambda_n\cap V_0$ and
  $$K_{\Lambda_n\cup\vec\partial_0\Lambda_n}= \prod_{y\in \Lambda_n\cap V_0}K_{\{y\}\cup N_y}=
  \prod_{y\in\bar\Lambda_n\cap V_0}K_{\{y\}\cup N_y}= K_{\bar\Lambda_n\cup\vec\partial_0\bar\Lambda_n}.$$
From Lemma \ref{fact-bar-Lambda1} it follows that
  $$K_{\Lambda_{n+1}\cup\vec\partial_0\Lambda_{n+1}} = K_{\bar\Lambda_{n}\cup\bar\Lambda_1}K_{(\Lambda_{n+1}\setminus\bar\Lambda_{n})\cup\vec\partial_0(\Lambda_{n+1}\setminus\bar\Lambda_{n})}$$
  then we obtain
  $$\widetilde\varphi_{\Lambda_n}\circ E_{\Lambda_n, \Lambda_{n+1}}(a)
  =\varphi^{0}\circ \mathbb E^{0}_{(\Lambda_{n+1}\setminus\bar\Lambda_{n})^c}(K_{\Lambda_{n+1}\cup\vec\partial_0\Lambda_{n+1}}^\ast a K_{\Lambda_{n+1}\cup\vec\partial_0\Lambda_{n+1}})$$
 Hence, by construction one gets
$$\varphi^{0}_V  = \varphi^{0}_V \circ \mathbb E_{(\Lambda_{n+1}\setminus\bar\Lambda_{n})^c}$$
so
\begin{equation}\label{projetiphin}
\widetilde{\varphi}_{\Lambda_n}\circ E_{\Lambda_{n}, \Lambda_{n+1}}(a) =
\varphi^0_V(K_{\Lambda_{n+1}\cup\vec\partial_0\Lambda_{n+1}}^\ast a K_{\Lambda_{n+1}\cup\vec\partial_0\Lambda_{n+1}})= \widetilde{\varphi}_{\Lambda_{n+1}} (a)
\end{equation}
Now iterating the equation \eqref{projetiphin}, we obtain
$$\widetilde\varphi_{n}= \widetilde\varphi_{\Lambda_0}\circ E_{\Lambda_0,\Lambda_1}\circ\dots\circ E_{\Lambda_{n-1}, \Lambda_n}$$
therefore
$$\varphi_V= \lim   \varphi_{\Lambda_0}\circ E_{\Lambda_0,\Lambda_1}\circ\dots\circ E_{\Lambda_{n-1}, \Lambda_n}$$
where $\varphi_{\Lambda_0}=
{\widetilde\varphi_{\Lambda_0}}\lceil_{\mathcal A_{\Lambda_0}}$.
This completes the proof.
\end{proof}

The provided construction allows us to produce a lot of interesting
examples of quantum Markov fields on arbitrary connected,
 infinite, locally finite graphs. Note that the construction
  is based on a specific tessellation
 on the considered graph, it allows us to express
  the Markov property for the local
 structure of the graph. We note that even in the
  classical case, the proposed construction gives other ways to define
  Markov fields different to the existing ones (see \cite{Spa}).
This construction opens new perspectives in the theory of phase
transitions in the scheme of quantum Markov fields (comp.
\cite{MBSo}).

\bibliographystyle{amsplain}

\end{document}